\documentclass{amsart}
\usepackage{amsmath,amssymb,amsfonts,enumerate,amsthm,graphicx,tikz,graphics}

\newtheorem{thm}{Theorem}[section]

\newtheorem{lem}[thm]{Lemma}
\newtheorem{prop}[thm]{Proposition}
\newtheorem{defn}[thm]{Definition}

\newtheorem{exam}[thm]{Example}
\newtheorem{rem}[thm]{Remark}

\numberwithin{equation}{section}

\begin{document}
\bibliographystyle{amsplain}

\title{A canonical polytopal resolution for transversal monomial ideals}

\author[Rahim Zaare-Nahandi]{Rahim Zaare-Nahandi\\University of Tehran}
\address{}

\thanks{}

\keywords{Minimal free resolution, cellular resolution, transversal
monomial ideal.}

\subjclass[2000]{13D02, 13F20}

\begin{abstract}
\noindent Let $S = k[x_{11}, \cdots, x_{1b_1}, \cdots, x_{n1},
\cdots, x_{nb_n}]$ be a polynomial ring in $m = b_1 + \cdots + b_n$
variables over a field $k$. For all $j$, $1\le j \le n$, let $P_j$
be the prime ideal generated by variables $\{x_{j1}, \cdots,
x_{jb_j}\}$ and let
$$I_{n, t} = \sum_{1\le j_1< \cdots <j_t\le n} P_{j_1}\ldots P_{j_t}$$
be the {\it transversal monomial ideal of degree $t$ on} $P_1,
\cdots, P_n$. We explicitly construct a canonical polytopal
$\mathbb{Z}^t$-graded minimal free resolution for the ideal $I_{n,
t}$ by means of suitable gluing of polytopes.
\end{abstract}

\maketitle

\section{Introduction}
The idea to describe a resolution of a monomial ideal by means of
combinatorial-geometric chain complexes was initiated by Bayer,
Peeva and Sturmfels \cite{BPS98}, and was extended by Bayer and
Sturmfels \cite{BS98}. Novik, Postnikov and Sturmfels gave a
polytopal complex that supports the minimal free resolution of
matroidal ideals \cite{NPS02}. Further extension was made by
J\"ollembeck and Welker who used discrete Morse theory to construct
desired CW complexes \cite{JW09}. Sinefakoupols showed that the
Eliahou-Kervaire resolution \cite{EK90} of Borel-fixed monomial
ideals which are Borel-fixed generated in one degree are cellular
supported on a union of convex polytopes \cite[Theorem 20]{Sin08}.
Mermin and Clark, separately, constructed regular cell complexes
which support the Eliahou-Kervaire resolution of a stable monomial
ideal \cite{Mer10}, \cite{C12}.
On the other hand, Valesco showed that there exist minimal free
monomial resolutions which can not be supported on any CW-complex
\cite{V08}. Still from a different point of view, Fl{\o}ystad
investigated monomial labelings on cell complexes which give minimal
free resolution of the ideal generated by these monomials in the
case the quotient ring is Cohen-Macaulay \cite{F09}. Dochtermann and
Engstr\"om gave a cellular resolutions for the ideals of cointerval
hypergraphs supported on polyhedral complexes and, they extended
their construction to more general hypergraphs \cite[Theorems 4.4
and 6.1]{DE10}. Further on, Engstr\"om and Nor\'en constructed
cellular resolutions for powers of ideals of a bipartite graph
\cite[Proposition 4.4 and Theorem 7.2]{DE12}. Okazaki and Yanagawa
showed that the Eliahou-Kervaire resolution of a Cohen-Macaulay
stable monomial ideal is supported by a regular CW complex whose
underlying space is a closed ball and they improved their result to
some variants of Bore-fixed ideals \cite[Theorem 3.8]{OY13}. More
recently, Goodarzi proved that the Herzog-Takayama resolution
\cite{HT02} is cellular \cite{G15}. This is a resolution for the
class of monomial ideals with regular linear
quotients. This class includes matroidal ideals and stable monomial ideals.\\


In this paper we consider the class of transversal monomial ideals
of degree $t$ (see Definition \ref{Defn}). A minimal free resolution
for monomial ideals in this class was given in \cite{ZZ04} and
\cite{Zaa11}. However, in general, this resolution is not polytopal
(see Remark \ref{compare}). We construct a labeled polytopal cell
complex which provides a canonical $\mathbb{Z}^t$-graded minimal
free resolution for transversal monomial ideals of degree $t$. Any
such monomial ideal is in fact a matroidal ideal (see Remark \ref
{Matr}). Thus, by \cite{G15} its Herzog-Takayama resolution is
cellular. Nevertheless, the Herzog-Takayama resolution is obtained
by consecutive applications of mapping cone constructions. In
contrast, the resolution provided here is based on decomposing any
ideal in this class and ``gluing'' together the polytopal cell
complexes associated to each summand to get a polytopal cell complex
which gives the expected resolution (see Lemma \ref{TLemma} and
Theorem \ref{TThm}). This resolution is explicit and is much simpler
to compute compared to the Herzog-Takayama resolution (see Example
\ref{TExam}). Furthermore, unlike most results which prove
cellularity of certain known resolutions, the cell-complex
constructed here provides the resolution.\\

The paper is organized is as follows. After the introduction in
Section 1, in Section 2 we provide preliminary material and
notations. Section 3 is devoted to the main result, the construction
of a polytopal cell complex that supports a graded minimal free
resolution of a transversal monomial ideal. Then the square-free
Veronese ideals of fixed degree, the only Cohen-Macaulay case of
this class of ideals, is treated. In this case, the cell complex is
a polytopal subdivision of a simplex, in particular, its underlying
space is homeomorphic to a closed ball. This last result, using some
``depolarization'', also follows by a result of Sinefakopoulos
\cite{Sin08}.

\section{Notations and preliminaries}
We assume familiarity with basic notions of cell complexes,
polytopes and monomial ideals referring to \cite{Zie95},
\cite{BH98}, \cite{MS05} and \cite{HH11}. In particular, we have
freely used several statements and results from \cite{Sin08}. Let
$I\subset R = \mathbf{k}[y_1,\cdots, y_n]$ be a monomial ideal in
the polynomial ring over a field $\mathbf{k}$ and let $G(I)$ be the
unique minimal monomial generating set of $I$. Let $X$ be a regular
cell complex with vertices labeled by members of $G(I)$. Let
$\epsilon_X$ be an incidence function on $X$. Any face of $X$ will
be labeled by $\mathbf{m}_F$, the least common multiple of the
monomials in $G(I)$ on the vertices of $F$. If $\mathbf{m}_F =
y_1^{a_1}\cdots y_n^{a_n}$, then the {\it degree} $\mathbf{a}_F$ is
defined to be the exponent vector $e(\mathbf{m}_F) =
(a_1,\cdots,a_n)$. Let $RF$ be the free $R$-module with one
generator in degree $\mathbf{a}_F$. The {\it cellular complex}
$\mathbf{F}_X$ is the $\mathbb{Z}^n$-graded $R$-module
$\bigoplus_{\emptyset\ne F\in X} RF$ with differentials
$$\partial(F) = \sum_{\emptyset\ne F'\in X} \epsilon(F,F')
\frac{\mathbf{m}_F}{\mathbf{m}_{F'}} F',$$ where $\epsilon(F,F')$ is
nonzero only if $F'$ is a face of $F$ of codimension $1$ and is $+1$
or $-1$ such that the resulting sequence is a chain complex
(see \cite{BS98}).\\

If the complex $\mathbf{F}_X$ is is exact, then $\mathbf{F}_X$ is
called a cellular resolution of $I$. Alternatively, we say that $I$
has a cellular resolution supported on the labeled cell complex $X$.
If $X$ is a simplicial complex or a polytopal cell complex, then
$\mathbf{F}_X$ is called {\it simplicial} and {\it polytopal}
resolution, respectively. A cellular resolution $\mathbf{F}_X$ is
minimal if and only if any two comparable faces $F'\subset F$ of the
same degree coincide. For more on cellular resolutions and polytopal
complexes we refer to \cite{BH98}, \cite{MS05} and
\cite{Zie95}.\\

The following two lemmas will be essential for our constructions.

\begin{lem}\label{Union} (The gluing lemma) \cite[Lemma 6]{Sin08}. Let $I$ and
$J$ be two monomial ideals in $R$ such that $G(I + J) = G(I) \cup
G(J)$.
Suppose that\\
$(i)$ $X$ and $Y$ are labeled regular cell complexes in some
$\mathbb{R}^N$ that supports a minimal free resolution
$\mathbf{F}_X$ and $\mathbf{F}_Y$ of $I$ and $J$, respectively,
and\\
$(ii)$ $X\cap Y$ is a labeled regular cell complex that supports a
minimal free resolution $\mathbf{F}_{X\cap Y}$ of $I\cap J$.\\
Then $X\cup Y$ is a labeled regular cell complex that supports a
minimal free resolution of $I + J$.
\end{lem}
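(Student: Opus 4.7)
The plan is to exploit a Mayer--Vietoris argument at the level of cellular chain complexes. First I would establish a short exact sequence of $\mathbb{Z}^n$-graded complexes of $R$-modules
$$0 \to \mathbf{F}_{X\cap Y} \to \mathbf{F}_X \oplus \mathbf{F}_Y \to \mathbf{F}_{X\cup Y} \to 0,$$
where the left map sends a face $F$ to $(F,-F)$ and the right map sends $(F,G)$ to $F+G$. Since every face of $X\cup Y$ belongs to $X$, to $Y$, or to both (in which case it lies in $X\cap Y$), the underlying graded modules match up correctly. The hypothesis $G(I+J)=G(I)\cup G(J)$ guarantees that the vertex sets of $X$ and $Y$ assemble into the minimal generators of $I+J$ without duplication or redundancy, and that whenever a vertex lies in both $X$ and $Y$ it carries the same monomial label in both, so the labeling extends unambiguously to $X\cup Y$.

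Next I would pass to homology. By hypothesis we have $H_i(\mathbf{F}_{X\cap Y})=H_i(\mathbf{F}_X)=H_i(\mathbf{F}_Y)=0$ for all $i\ge1$, while $H_0(\mathbf{F}_{X\cap Y})=I\cap J$ and $H_0(\mathbf{F}_X)\oplus H_0(\mathbf{F}_Y)=I\oplus J$. The long exact sequence then immediately yields $H_i(\mathbf{F}_{X\cup Y})=0$ for $i\ge 2$. In the relevant tail, the sequence becomes
$$0 \to H_1(\mathbf{F}_{X\cup Y}) \to I\cap J \to I\oplus J \to H_0(\mathbf{F}_{X\cup Y}) \to 0,$$
where the middle map is the diagonal $a\mapsto(a,-a)$, which is injective; hence $H_1(\mathbf{F}_{X\cup Y})=0$, and $H_0(\mathbf{F}_{X\cup Y})\cong (I\oplus J)/(I\cap J)\cong I+J$. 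Thus $\mathbf{F}_{X\cup Y}$ is a free resolution of $I+J$.

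Finally I would verify minimality. Suppose $F'\subset F$ are comparable faces of $X\cup Y$ with $\mathbf{m}_F=\mathbf{m}_{F'}$. Since $X$ and $Y$ are regular cell complexes (hence closed under taking faces of their cells), the pair $(F,F')$ lies entirely in $X$ or entirely in $Y$, and minimality of the respective resolution forces $F=F'$. The main obstacle I anticipate is the bookkeeping needed to choose the incidence functions on $X$, $Y$, and $X\cap Y$ in a globally consistent way so that the Mayer--Vietoris sequence is genuinely a sequence of chain complexes (i.e.\ the boundary squares vanish and the maps commute with differentials); this is a standard orientation issue for regular CW pairs, but in the labeled-cellular setting of Bayer--Sturmfels it must be verified in concert with the monomial labels, and the compatibility on $X\cap Y$ is exactly what allows the gluing to go through.
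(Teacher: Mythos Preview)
The paper does not supply its own proof of this lemma; it is quoted verbatim from \cite[Lemma~6]{Sin08} and used as a black box. Your Mayer--Vietoris argument is correct and is precisely the approach taken in the cited source, so there is nothing to contrast.
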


\begin{rem}\cite[Remark 7]{Sin08}.
For any two monomial ideals $I$ and $J$, we have
$$G(I + J)\subseteq G(I) \cup G(J).$$
A case where equality holds is when all elements of $G(I) \cup G(J)$
are of the same degree, or more generally, when $G(I) \cup G(J)$ is
a minimal set of generators.
\end{rem}

\begin{lem}\label{Product} \cite[Lemma 8]{Sin08}.
Let $I \subset \mathbf{k}[y_1,\cdots, y_k]$ and $J \subset
\mathbf{k}[y_{k+1},\cdots, y_n]$ be two monomial ideals. Suppose
that $X$ and $Y$ are labeled regular cell complexes in some
$\mathbb{R}^N$ of dimensions $k-1$ and $n-k-1$, respectively, that
support minimal free resolutions $\mathbf{F}_X$ and $\mathbf{F}_Y$
of $I$ and $J$, respectively. Then the labeled cell complex $X\times
Y$ supports a minimal free resolution $\mathbf{F}_{X\times Y}$ of
$IJ$.
\end{lem}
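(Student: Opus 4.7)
The plan is to identify the cellular complex $\mathbf{F}_{X\times Y}$ with the tensor product complex $\mathbf{F}_X\otimes_R\mathbf{F}_Y$ of complexes of $R$-modules, and then exploit the fact that $I$ and $J$ involve disjoint sets of variables to obtain exactness and minimality.

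First, I would verify that labels on the product complex multiply. A cell of $X\times Y$ has the form $F\times G$ with vertices $(v,w)$, $v\in F$, $w\in G$, and vertex labels $\mathbf{m}_v\mathbf{m}_w$. Because $\mathbf{m}_v\in\mathbf{k}[y_1,\ldots,y_k]$ and $\mathbf{m}_w\in\mathbf{k}[y_{k+1},\ldots,y_n]$ use disjoint variables, the lcm factors and $\mathbf{m}_{F\times G}=\mathbf{m}_F\,\mathbf{m}_G$. Hence the summand $R(F\times G)$ has generator in multi-degree $\mathbf{a}_F+\mathbf{a}_G$, exactly that of $F\otimes G$ in $\mathbf{F}_X\otimes_R\mathbf{F}_Y$.

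Second, with the standard product incidence function $\epsilon_{X\times Y}(F\times G,F'\times G)=\epsilon_X(F,F')$ and $\epsilon_{X\times Y}(F\times G,F\times G')=(-1)^{\dim F}\epsilon_Y(G,G')$, the cellular differential on $\mathbf{F}_{X\times Y}$ takes the Leibniz form of the tensor-product differential; by the multiplicativity of labels, the ratios $\mathbf{m}_{F\times G}/\mathbf{m}_{F'\times G}=\mathbf{m}_F/\mathbf{m}_{F'}$ (and analogously on the other side) factor correctly. This yields an isomorphism $\mathbf{F}_{X\times Y}\cong\mathbf{F}_X\otimes_R\mathbf{F}_Y$ of $\mathbb{Z}^n$-graded complexes of free $R$-modules.

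Third and most delicately, I would show that this tensor product is a minimal free resolution of $IJ$. Writing $R=R_1\otimes_{\mathbf{k}}R_2$ with $R_1=\mathbf{k}[y_1,\ldots,y_k]$ and $R_2=\mathbf{k}[y_{k+1},\ldots,y_n]$, one can take $\mathbf{F}_X=\mathbf{F}'_X\otimes_{\mathbf{k}}R_2$ for an $R_1$-free resolution $\mathbf{F}'_X$ of $I\subset R_1$, and symmetrically for $\mathbf{F}_Y$. Then $\mathbf{F}_X\otimes_R\mathbf{F}_Y\cong\mathbf{F}'_X\otimes_{\mathbf{k}}\mathbf{F}'_Y$, whose homology is computed by the K\"unneth theorem over the field $\mathbf{k}$ to be $I\otimes_{\mathbf{k}}J$ in degree zero and zero above. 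Since $I$ and $J$ use disjoint variables, the multiplication map $I\otimes_{\mathbf{k}}J\to IJ$ is a bijection on monomial bases and hence an isomorphism, so $\mathbf{F}_X\otimes_R\mathbf{F}_Y$ resolves $IJ$. Minimality is automatic: if $F\times G\supsetneq F'\times G'$ have the same multi-degree, then $\mathbf{m}_F\mathbf{m}_G=\mathbf{m}_{F'}\mathbf{m}_{G'}$, and disjointness of variables forces $\mathbf{m}_F=\mathbf{m}_{F'}$ and $\mathbf{m}_G=\mathbf{m}_{G'}$, contradicting the minimality of $\mathbf{F}_X$ or $\mathbf{F}_Y$. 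The main obstacle is arranging the K\"unneth step cleanly and verifying that the augmentation map of the tensor-product complex lands in $IJ$ rather than merely in $I\otimes_R J$.
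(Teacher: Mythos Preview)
The paper does not supply its own proof of this lemma; it is quoted verbatim from \cite[Lemma 8]{Sin08} and used as a black box. So there is no in-paper argument to compare against.

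Your proposal is correct and is essentially the standard argument (and the one Sinefakopoulos gives): identify $\mathbf{F}_{X\times Y}$ with $\mathbf{F}_X\otimes_R\mathbf{F}_Y$ via multiplicativity of the labels and the Leibniz differential, then use the disjointness of variables to reduce to a K\"unneth computation over $\mathbf{k}$, yielding $IJ$ in homological degree zero and nothing above. Your minimality check is also the right one. The only cosmetic point is that the ``obstacle'' you flag about the augmentation is not really an obstacle: once you have $\mathbf{F}_X\otimes_R\mathbf{F}_Y\cong\mathbf{F}'_X\otimes_{\mathbf{k}}\mathbf{F}'_Y$, the degree-zero homology is literally $I\otimes_{\mathbf{k}}J$ as a $\mathbf{k}$-vector space, and the $R$-module map to $R$ induced by the augmentations is $a\otimes b\mapsto ab$, which is injective with image $IJ$ because monomials in disjoint variable sets multiply without cancellation.
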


Recall that a simplicial complex $\mathcal{M}$ on $[n] = \{1,\cdots,
n\}$ is called a {\it matroid} if, for any two distinct facets $F$
and $G$ of $\mathcal{M}$ and for any $i\in F$, there exists $j\in G$
such that $(F\setminus\{i\})\cup \{j\}$ is a facet of $\mathcal{M}$.
A matroid $\mathcal{M}$ is said to have {\it strong exchange
property} if, for any two distinct facets $F$ and $G$ of
$\mathcal{M}$ and for any $i\in F$, $j\in G \setminus F$,
$(F\setminus\{i\})\cup \{j\}$ is a facet of $\mathcal{M}$. Let $A =
\{v_1,\cdots, v_m\}$ be a set of vectors in $\mathbb{R}^n$. The set
of all linearly independent subsets of $A$ of cardinality $t \le n$
forms a matroid which is called a {\it linear matroid}. A matroid is
called linear if it has a representation using a vector space over a
field. A monomial ideal $I\subset \mathbf{k}[y_1,\cdots, y_n]$ is
said to be a {\it matroidal ideal} if it is the facet ideal of a
matroid $\mathcal{M}$ on $[n]$ under some labeling of vertices of
$\mathcal{M}$ by the variables $y_1, \cdots, y_n$.\\

\begin{defn}\label{Defn} We fix positive integers $1 \le b_1\le \cdots \le b_n$ and a
polynomial ring $$S = k[x_{11}, \cdots, x_{1b_1}, \cdots, x_{n1},
\cdots, x_{nb_n}]$$ in $m = b_1 + \cdots + b_n$ variables over a
field $k$. Let $P_j = (x_{j1}, \cdots, x_{jb_j})$, $j = 1, \cdots,
n$. The ideal
$$I_{n, t} = \sum_{1\le j_1< \cdots <j_t\le n}
P_{j_1}\ldots P_{j_t} = \sum_{1\le j_1<\cdots <j_t\le n} P_{j_1}\cap
\ldots \cap P_{j_t}$$ is called {\it the transversal monomial ideal
of degree $t$ on} $P_1, \cdots, P_n$.
\end{defn}

This class of ideals naturally arises in the study of generic
singularities of algebraic varieties
\cite{SZ05}.\\

It is helpful to notice that the ideal $I_{n, t}$ may also be
realized as the ideal of $t$-minors of a matrix $D$ where
$$D=\left[
\begin{array}{cccccccccccc}
x_{11}& \cdots& x_{1b_1}&0&\cdots&0&0&\cdots&0&0&\cdots&0\\
0&\cdots&0&x_{21} &\cdots&x_{2b_2}&0&\cdots&0&0&\cdots&0\\
\vdots&\vdots&\vdots&
\vdots&\vdots&\vdots&\vdots&\vdots&\vdots&\vdots&\vdots&\vdots\\
0&\cdots&0&0&\cdots&0&0&\cdots&0&x_{n1}&\cdots&x_{nb_n}
\end{array}\right].$$

\begin{rem}\label{Matr} Let $\{v_1, \cdots, v_n\}$ be a vector space basis in
$\mathbb{R}^n$. For each $i$, $1 \le i \le n$, let $\lambda_{ij}$,
$j = 1, \cdots, b_i$, be distinct nonzero real numbers. Let $A =
\{\lambda_{ij} v_i: 1 \le i \le n, 1 \le j \le b_i\}$. In other
words, $A$ is a set of vectors linearly independent up to
proportionality. Let $\mathcal{M}$ be the linear matroid of all
linearly independent subsets of $A$ of cardinality $t \le n$. Then
$I_{n, t}$ is the facet ideal of the linear matroid $\mathcal{M}$,
when the vertex corresponding to $\lambda_{ij}v_i$ is labeled by
$x_{ij}$. In particular, $I_{n, t}$ is a matroidal ideal. Therefore,
$I_{n,t}$ has linear quotients \cite[Theorem 12.6.2]{HH11}, and a
linear resolution \cite{Bj92} or \cite[Lemma 8.2.1]{HH11}. On the
other hand, the only case $I_{n,t}$ is Cohen-Macaualay is the case
of the square-free Veronese ideal of degree $t$, i.e., the case $b_1
= \cdots = b_n = 1$, $n=m$ \cite[Theorem 12.6.7]{HH11}. In this case
the corresponding matroid $\mathcal{M}$ has the strong exchange
property.
\end{rem}

\section{A cellular graded resolution for transversal monomial ideals}

In this section we construct a polytopal
cell complex that supports a graded minimal free resolution of $I_{n, t}$.\\

We will need the construction for certain special cases.

\begin{prop}\label{TProp} Let $\Delta(P_i)$ be the
$(b_i-1)$-simplex with vertices labeled by variables generating
$P_i$. Then the following statements
hold:\\
(a) The labeled cartesian product of simplexes $\Gamma_{n,n} =
\Delta(P_1)\times \cdots \times \Delta(P_t)$ supports a
$\mathbb{Z}^n$-graded minimal free resolution of $I_{n,n} =
P_1.\cdots .P_n.$\\
(b) $I_{n,n-1} = \sum_{i=1}^{n-1}
P_1.\cdots.P_{i-1}.(P_i+P_{i+1}).P_{i+2}.\cdots.P_n$ , and the
labeled cell complex
$$\Gamma_{n,n-1} = \bigcup_{i=1}^{n-1}\Delta(P_1)\times\cdots\times\Delta(P_{i-1})\times
\Delta(P_i+P_{i+1})\times\Delta(P_{i+2})\times\cdots\times\Delta(P_n)$$
supports a $\mathbb{Z}^{n-1}$-graded minimal free resolution of
$I_{n,n-1}$ and $\Gamma_{n,n-1}\subset \Gamma_{n,n}$.
\end{prop}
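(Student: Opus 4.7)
\emph{Part (a).} For each $j$ the prime $P_j$ is generated by the regular sequence of distinct variables $x_{j,1},\ldots,x_{j,b_j}$, so its Koszul complex is a $\mathbb{Z}^{b_j}$-graded minimal free resolution, and this resolution is exactly the one supported by the labeled $(b_j-1)$-simplex $\Delta(P_j)$ (each face carries the product of its vertex labels, i.e.\ its lcm). Since the variables of $P_1,\ldots,P_n$ are pairwise disjoint, iterated application of Lemma \ref{Product} shows that $\Gamma_{n,n}=\Delta(P_1)\times\cdots\times\Delta(P_n)$ supports a $\mathbb{Z}^n$-graded minimal free resolution of $P_1\cdots P_n=I_{n,n}$.

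\emph{Part (b).} I first verify the displayed identity by expanding
\[
P_1\cdots P_{i-1}(P_i+P_{i+1})P_{i+2}\cdots P_n = P_1\cdots\hat{P}_{i+1}\cdots P_n + P_1\cdots\hat{P}_i\cdots P_n,
\]
and observing that as $i$ varies over $1,\ldots,n-1$ every summand $P_1\cdots\hat{P}_j\cdots P_n$, $j=1,\ldots,n$, defining $I_{n,n-1}$ appears at least once. Since $P_i+P_{i+1}$ is itself generated by the regular sequence of $b_i+b_{i+1}$ variables in its support, the simplex $\Delta(P_i+P_{i+1})$ supports the Koszul minimal free resolution of $P_i+P_{i+1}$ by the same reasoning as in part (a). The factor ideals $P_1,\ldots,P_{i-1},P_i+P_{i+1},P_{i+2},\ldots,P_n$ live in pairwise disjoint variables, so iterated use of Lemma \ref{Product} yields that each $X_i:=\Delta(P_1)\times\cdots\times\Delta(P_i+P_{i+1})\times\cdots\times\Delta(P_n)$ supports a minimal free resolution of $I_i:=P_1\cdots P_{i-1}(P_i+P_{i+1})P_{i+2}\cdots P_n$.

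It remains to glue the $X_i$'s. I argue by induction on $k\le n-1$ that $Y_k:=X_1\cup\cdots\cup X_k$ supports a minimal free resolution of $J_k:=I_1+\cdots+I_k$, applying Lemma \ref{Union} with $X=Y_{k-1}$, $Y=X_k$. The generator condition $G(J_{k-1}+I_k)=G(J_{k-1})\cup G(I_k)$ holds because all generators of $I_{n,n-1}$ are squarefree of the same degree $n-1$ and therefore form a minimal set. For the key intersection condition, I observe that for $|j-k|\ge 2$ the complexes $X_j$ and $X_k$ have at some factor position simplex-factors $\Delta(P_a)$ and $\Delta(P_b)$ with $a\neq b$ on disjoint vertex sets, so $X_j\cap X_k$ is empty; hence
\[
Y_{k-1}\cap X_k = X_{k-1}\cap X_k = \Delta(P_1)\times\cdots\times\Delta(P_{k-1})\times\Delta(P_{k+1})\times\cdots\times\Delta(P_n),
\]
which by Lemma \ref{Product} supports the minimal free resolution of $P_1\cdots\hat{P}_k\cdots P_n$. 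A direct monomial computation confirms $J_{k-1}\cap I_k=P_1\cdots\hat{P}_k\cdots P_n$: any monomial in the intersection is divisible by a generator of $J_{k-1}$ with missing index $j_0\le k$ and by a generator of $I_k$ with missing index $j_1\in\{k,k+1\}$, and unless $j_0=j_1=k$ the lcm of these two generators already contains one variable from every $P_\ell$, and hence lies in $P_1\cdots\hat{P}_k\cdots P_n$ anyway. Lemma \ref{Union} then upgrades the inductive hypothesis to $Y_k$; at $k=n-1$ we obtain the result.

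The principal technical issue is to arrange compatible geometric realisations of all the $X_i$'s inside a common $\mathbb{R}^N$ so that their pairwise intersections are precisely the labelled subcomplexes described above (and so that the inclusion $\Gamma_{n,n-1}\subset\Gamma_{n,n}$ claimed in the statement can be read off). This is achieved by first fixing embeddings of the atomic simplices $\Delta(P_j)$ and then realising each $\Delta(P_j+P_{j+1})$ as the affine simplex spanned by the union of their vertex sets, so that $\Delta(P_j)$ and $\Delta(P_{j+1})$ are opposite faces shared consistently between neighbouring $X_i$'s.
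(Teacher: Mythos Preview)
Your argument for part~(a) is the same as the paper's. For part~(b) your proof is correct but follows a genuinely different route. The paper proceeds by induction on $n$, using the two-term decomposition
\[
I_{n,n-1}=I_{n-1,n-2}\cdot P_n+I_{n-2,n-2}\cdot(P_{n-1}+P_n),
\]
and applies the gluing lemma \emph{once} to $\Gamma_{n-1,n-2}\times\Delta(P_n)$ and $\Gamma_{n-2,n-2}\times\Delta(P_{n-1}+P_n)$, whose intersection is $\Gamma_{n-2,n-2}\times\Delta(P_n)$. You instead keep $n$ fixed and glue the $n-1$ polytopes $X_1,\ldots,X_{n-1}$ from left to right, relying on the observation that $X_j\cap X_k=\varnothing$ for $|j-k|\ge 2$ so that only the adjacent intersection $X_{k-1}\cap X_k=\Delta(P_1)\times\cdots\times\widehat{\Delta(P_k)}\times\cdots\times\Delta(P_n)$ matters at each step. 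Your disjointness claim and your computation $J_{k-1}\cap I_k=P_1\cdots\widehat{P_k}\cdots P_n$ are both correct (with your proposed embedding, the $X_i$ lie in pairwise distinct affine subspaces once $|i-j|\ge 2$). The trade-off: your argument is more self-contained for this proposition, while the paper's recursion on $n$ is the template reused in the proof of Theorem~\ref{TThm}, so it sets up the general machinery. One small gap: you assert that the inclusion $\Gamma_{n,n-1}\subset\Gamma_{n,n}$ ``can be read off'' from your embedding, but you do not actually verify it; the paper derives this from the explicit product expressions, and you should say at least a word about why your realisation is compatible with that inclusion.
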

\begin{proof} (a) The Koszul complex on the sequence $\{x_{i1}, \cdots,
x_{ib_i}\}$ is a minimal free resolution of $P_i$, $i = 1, \cdots,
n$ and $\Delta(P_i)$ supports the minimal free resolution of $P_i$.
Hence by Lemma 2.3 the cartesian product $\Delta(P_1)\times \cdots
\Delta(P_n)$, as a labeled polytopal cell complex, supports a
$\mathbb{Z}^n$-graded minimal free resolution of $I_{n,n}$.\\

\noindent (b) We use induction on $n$ for both claims. A generator
of $I_{n,n-1}$ is either a monomial with a variable in $P_n$ or with
a variable in $P_{n-1}+P_n$. Thus we have
$$I_{n,n-1} = I_{n-1,n-2}.P_n + I_{n-2,n-2}.(P_{n-1}+P_n).$$
Replacing $I_{n-1,n-2}$ by induction hypothesis and $I_{n-2,n-2}$ by
(a) the equality for $I_{n,n-1}$ follows.\\
Now by induction hypothesis and Lemma 2.3, $\Gamma_{n-1,n-2}\times
\Delta(P_n)$ and $\Gamma_{n-2,n-2}\times \Delta(P_{n-1}+P_n)$
provide labeled cell complexes which support
$\mathbb{Z}^{n-1}$-graded minimal free resolutions of
$I_{n-1,n-2}.P_n$ and $I_{n-2,n-2}.(P_{n-1}+P_n)$, respectively. The
conditions of the gluing lema are satisfied for these two ideals and
the cell complexes. In fact, since the generators of $P_i$'s form a
partition of the set of variables, we have
$$[I_{n-1,n-2}.P_n]\cap [I_{n-2,n-2}.(P_{n-1}+P_n)]$$
$$= [I_{n-1,n-2}\cap P_n]\cap [I_{n-2,n-2}\cap (P_{n-1}+P_n)]$$ $$=
I_{n-2,n-2}.P_n.$$ On the other hand, replacing $\Gamma_{n-1,n-2}$
and $\Gamma_{n-2,n-2}$ by induction hypothesis and (a), we get
$$[\Gamma_{n-1,n-2}\times \Delta(P_n)]\cap [\Gamma_{n-2,n-2}\times
\Delta(P_{n-1}+P_n)] = \Gamma_{n-2,n-2}\times \Delta(P_n),$$ while
the last equality follows by the inclusion $\Gamma_{n-2,n-2}\subset
\Gamma_{n-1,n-2}$ which is a consequence of the expressions of them
using induction hypothesis. Therefore,
$$[\Gamma_{n-1,n-2}\times \Delta(P_n)]\cup [\Gamma_{n-2,n-2}\times
\Delta(P_{n-1}+P_n)] = \Gamma_{n,n-1}$$ is a
$\mathbb{Z}^{n-1}$-graded minimal free resolution of $I_{n,n-1}$.\\
Finally, the inclusion $\Gamma_{n,n-1}\subset \Gamma_{n,n}$ follows
by the expression of the cell complexes in (b).

\end{proof}

To prove the main result, we will further need the following lemma.

\begin{lem}\label{TLemma}
For integers $1\le q\le p\le n$ and $1\le r\le n$, let
$$I_{p,q} = \sum_{1\le j_1< \cdots <j_q\le p}P_{j_1}.\ldots .P_{j_q},$$
and
$$Q_r = \sum_{j = r}^n P_j.$$
Then;\\
(a) For all $t$, $2\le t\le n$, the following holds
$$I_{n,t} = \sum_{i=t-1}^{n-1} I_{i,t-1}.Q_{i+1}.$$
(b) For all $s$, $t\le s\le n$, let
$$I_{n,t}(s) = \sum_{i=t-1}^{s-1} I_{i,t-1}.Q_{i+1}.$$ Then for all
$s$, $t\le s \le n-1$, we have
$$I_{n,t}(s)\cap [I_{s,t-1}.Q_{s+1}] = I_{s-1,t-1}.Q_{s+1}.$$
(By (a), $I_{n,t}(s)$ is the transversal monomial ideal of degree
$t$ on $P_1, \cdots, P_{s-1}$, and $Q_s = P_s + \cdots + P_n$.)
\end{lem}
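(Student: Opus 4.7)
The plan is to establish each part in turn, leveraging (a) in the proof of (b).

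Part (a) is a straightforward decomposition of the minimal generators of $I_{n,t}$ according to their largest prime index. For the inclusion $\supseteq$, any minimal generator of $I_{i,t-1} \cdot Q_{i+1}$ is a product $x_{j_1 a_1} \cdots x_{j_{t-1} a_{t-1}} \cdot x_{kc}$ with $j_1 < \cdots < j_{t-1} \le i$ and $i+1 \le k \le n$, which uses $t$ distinct prime indices and therefore lies in $I_{n,t}$. Conversely, given a minimal generator coming from $P_{j_1} \cdots P_{j_t}$ with $j_1 < \cdots < j_t$, I would set $i := j_{t-1}$ and $k := j_t$; the inequalities $t-1 \le j_{t-1} \le n-1$ and $i+1 \le j_t \le n$ are automatic, and the factorization $(P_{j_1} \cdots P_{j_{t-1}}) \cdot P_{j_t}$ places the product in $I_{i,t-1} \cdot Q_{i+1}$, which is one of the summands on the right-hand side.

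For (b), the starting point is the parenthetical remark in the statement: applying (a) to the contracted sequence of $s$ ideals $P_1, \ldots, P_{s-1}, Q_s$ (treating $Q_s$ as a single prime-like summand) identifies $I_{n,t}(s)$ with the transversal monomial ideal of degree $t$ on this shorter list. Its minimal monomial generators therefore come in two types: type (A), products $x_{j_1 a_1} \cdots x_{j_t a_t}$ with $1 \le j_1 < \cdots < j_t \le s-1$; and type (B), products $x_{j_1 a_1} \cdots x_{j_{t-1} a_{t-1}} \cdot x_{kc}$ with $j_{t-1} \le s-1$ and $s \le k \le n$. The inclusion $\supseteq$ in (b) is then immediate: taking $i = s-1$ in the defining sum gives $I_{s-1,t-1} \cdot Q_s \subseteq I_{n,t}(s)$, whence $I_{s-1,t-1} \cdot Q_{s+1} \subseteq I_{n,t}(s)$; and trivially $I_{s-1,t-1} \cdot Q_{s+1} \subseteq I_{s,t-1} \cdot Q_{s+1}$.

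For the reverse inclusion I would invoke the standard fact that the intersection of two monomial ideals is generated by the pairwise least common multiples of their minimal generators, and write a typical generator of $I_{s,t-1} \cdot Q_{s+1}$ as $h = x_{l_1 b_1} \cdots x_{l_{t-1} b_{t-1}} \cdot x_{\lambda d}$ with $l_{t-1} \le s$ and $\lambda \ge s+1$. For a generator $g$ of $I_{n,t}(s)$ of type (A), the LCM of $g$ and $h$ is divisible by $x_{j_1 a_1} \cdots x_{j_{t-1} a_{t-1}} \cdot x_{\lambda d}$, which already sits in $I_{s-1,t-1} \cdot Q_{s+1}$. For type (B) with $k \ge s+1$, the generator $g$ itself already lies in $I_{s-1,t-1} \cdot Q_{s+1}$, so the LCM does too. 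The crux is type (B) with $k = s$: here neither $g$ nor $h$ individually lies in $I_{s-1,t-1} \cdot Q_{s+1}$, and one must ``swap'' the $P_s$-factor $x_{sc}$ of $g$ for the $Q_{s+1}$-factor $x_{\lambda d}$ of $h$. The LCM is divisible by $x_{j_1 a_1} \cdots x_{j_{t-1} a_{t-1}} \cdot x_{\lambda d}$, which lies in $I_{s-1,t-1} \cdot Q_{s+1}$ because $j_{t-1} \le s-1 < s+1 \le \lambda$. This index-swap, forced by the restriction $\lambda \ge s+1$ built into $Q_{s+1}$, is the main content of the lemma.
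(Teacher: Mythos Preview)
Your proof is correct, but the route differs from the paper's in both parts.

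For (a), you argue directly at the level of monomial generators, reading off the index $i=j_{t-1}$ from each generator; the paper instead proceeds by induction on $n$ via the splitting $I_{n,t}=I_{n-1,t}+I_{n-1,t-1}\cdot Q_n$. Your argument is shorter and equally rigorous.

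For (b), the contrast is sharper. You work combinatorially: classify the minimal generators of $I_{n,t}(s)$ into types (A) and (B), pair them against generators of $I_{s,t-1}\cdot Q_{s+1}$, and check that every $\mathrm{lcm}$ lands in $I_{s-1,t-1}\cdot Q_{s+1}$. The paper instead works purely with ideal identities: it first rewrites $I_{n,t}(s)=I_{s,t}+I_{s-1,t-1}\cdot Q_{s+1}$, then applies the modular law to distribute the intersection, and reduces each piece using the disjoint-variable identities $I_{s,t-1}\cdot Q_{s+1}=I_{s,t-1}\cap Q_{s+1}$, $I_{s,t}\cap Q_{s+1}=I_{s,t}\cdot Q_{s+1}$, together with the chain of inclusions $I_{s,t}\subseteq I_{s-1,t-1}\subseteq I_{s,t-1}$. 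Your approach makes the ``index swap'' phenomenon visible at the generator level and requires no auxiliary ideal inclusions; the paper's approach avoids any case analysis on generators and yields, as a byproduct, the clean decomposition $I_{n,t}(s)=I_{s,t}+I_{s-1,t-1}\cdot Q_{s+1}$, which is itself useful structural information.
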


\begin{proof} For $1\le r \le s \le n$, let
$$Q_r(s) = \sum_{j=r}^s P_j,$$
so that in particular, $Q_r(n)= Q_r$. \\
To prove (a) we use induction on $n$. For $n = t$ the equality is
clear. Assuming the quality for integers less than $n$, the
inclusion $\supseteq$ is obvious for $n$ because each term in the
sum is contained in $I_{n,t}$. For the other inclusion, observe that
$$I_{n,t} = I_{n-1,t} + I_{n-1,t-1}.Q_n.$$
By induction hypothesis, we have
$$I_{n-1,t} = \sum_{i=t-1}^{n-2}I_{i,t-1}.Q_{i+1}(n-1).$$
Replacing this in the previous equality the inclusion $\subseteq$
also follows by term by term inclusion.\\
(b). By (a) we get
$$I_{n,t}(s) = \sum_{i=t-1}^{s-1}
I_{i,t-1}.Q_{i+1}(s) + (\sum_{i=t-1}^{s-1} I_{i,t-1}).Q_{s+1} =
I_{s,t} + I_{s-1,t-1}.Q_{s+1}.$$ Since {\it the modular law} holds
for monomial ideals, we have
$$I_{n,t}(s)\cap [I_{s,t-1}.Q_{s+1}] = [I_{s,t} + I_{s-1,t-1}.Q_{s+1}]\cap
[I_{s,t-1}.Q_{s+1}]$$
$$ = I_{s,t}\cap [I_{s,t-1}.Q_{s+1}] +
[I_{s-1,t-1}.Q_{s+1}]\cap [I_{s,t-1}.Q_{s+1}].$$ Since $I_{s-1,t-1}
\subseteq I_{s,t-1}$, the resulting expression is equal to
$$I_{s,t}\cap [I_{s,t-1}.Q_{s+1}] + [I_{s-1,t-1}.Q_{s+1}].$$
But $I_{s,t-1}.Q_{s+1} = I_{s,t-1}\cap Q_{s+1}$ and $I_{s,t}\cap
Q_{s+1} = I_{s,t}.Q_{s+1}$ because the set of variables involved in
the generators of $I_{s,t-1}$ or $I_{s,t}$ is disjoint from the the
set of variable generators of $Q_{s+1}$. Thus by the fact that
$I_{s,t} \subseteq I_{s,t-1}$ we get
$$I_{s,t}\cap [I_{s,t-1}.Q_{s+1}] = I_{s,t} \cap I_{s,t-1}\cap Q_{s+1} =
I_{s,t}\cap Q_{s+1} = I_{s,t}.Q_{s+1}.$$ Therefore, using the
inclusion $I_{s,t} \subseteq I_{s-1,t-1}$, we have
$$I_{n,t}(s)\cap [I_{s,t-1}.Q_{s+1}] = [I_{s,t}.Q_{s+1}] +
[I_{s-1,t-1}.Q_{s+1}] = I_{s-1,t-1}.Q_{s+1},$$ as required.
\end{proof}

We may now give the main result of the paper.

\begin{thm}\label{TThm}
There exists a labeled regular polytopal cell complex $\Gamma_{n,t}
\subset \mathbb{R}^{m-t}$ that supports a $\mathbb{Z}^t$-graded
minimal free resolution of $I_{n,t}$. Explicitly,
$$\Gamma_{n,t} = \bigcup_{i=t-1}^{n-1}[\Gamma_{i,t-1} \times
\Delta(Q_{i+1})],$$ where by $\Delta(Q_{i+1})$ we mean the simplex
with vertices on the variables generating $Q_{i+1} = \sum_{j =
i+1}^n P_j.$ Moreover, $\Gamma_{s,t}$ is connected, and as labeled
polytopes, $\Gamma_{s,t} \subset \Gamma_{s+1,t}$ for all $s$, $t\le
s\le n-1$.
\end{thm}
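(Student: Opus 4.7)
The plan is induction on $t$, running the statement simultaneously for all $n\ge t$. For the base case $t=1$ the ideal $I_{n,1}=Q_1$ is resolved by the Koszul complex supported on the simplex $\Delta(Q_1)$; the proposed formula for $\Gamma_{n,1}$ collapses to $\Delta(Q_1)$ once one interprets $\Gamma_{i,0}$ as a point, and the claimed inclusion $\Gamma_{s,1}\subset\Gamma_{s+1,1}$ reduces to $\Delta(Q_1(s))\subset\Delta(Q_1(s+1))$ in the notation introduced in the proof of Lemma~\ref{TLemma}.

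For the inductive step, suppose the theorem is known for $t-1$ and all admissible $n$. Fix $i$ with $t-1\le i\le n-1$. The variables appearing in generators of $I_{i,t-1}$ (those in $P_1,\ldots,P_i$) are disjoint from those in $Q_{i+1}=P_{i+1}+\cdots+P_n$. By the outer induction hypothesis, $\Gamma_{i,t-1}$ supports a minimal free resolution of $I_{i,t-1}$, and the Koszul complex on $\Delta(Q_{i+1})$ is a minimal free resolution of $Q_{i+1}$. Hence Lemma~\ref{Product} gives that $\Gamma_{i,t-1}\times\Delta(Q_{i+1})$ supports a $\mathbb{Z}^t$-graded minimal free resolution of $I_{i,t-1}\cdot Q_{i+1}$; a dimension check shows each such product lives naturally in $\mathbb{R}^{m-t}$, providing a common ambient space.

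Next, assemble these pieces by an inner induction on $s\in\{t,\ldots,n\}$. Let $\Gamma_{n,t}(s)=\bigcup_{i=t-1}^{s-1}[\Gamma_{i,t-1}\times\Delta(Q_{i+1})]$ and claim that it supports a minimal free resolution of $I_{n,t}(s)$. The base case $s=t$ is the single-summand case above. For the inductive step write $\Gamma_{n,t}(s+1)=\Gamma_{n,t}(s)\cup[\Gamma_{s,t-1}\times\Delta(Q_{s+1})]$ and apply Lemma~\ref{Union}: the hypothesis $G(I+J)=G(I)\cup G(J)$ is automatic because $I_{n,t}$ is generated in the single degree $t$, and the ideal intersection equals $I_{s-1,t-1}\cdot Q_{s+1}$ by Lemma~\ref{TLemma}(b). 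For the cell-complex intersection, combine the outer inductive inclusion $\Gamma_{i,t-1}\subset\Gamma_{s,t-1}$ (valid for $i\le s-1$) with the obvious inclusion $\Delta(Q_{s+1})\subset\Delta(Q_{i+1})$ to obtain term by term
$$[\Gamma_{i,t-1}\times\Delta(Q_{i+1})]\cap[\Gamma_{s,t-1}\times\Delta(Q_{s+1})]=\Gamma_{i,t-1}\times\Delta(Q_{s+1}),$$
and then taking the union over $i=t-1,\ldots,s-1$ yields
$$\Gamma_{n,t}(s)\cap[\Gamma_{s,t-1}\times\Delta(Q_{s+1})]=\Gamma_{s-1,t-1}\times\Delta(Q_{s+1}),$$
which by Lemma~\ref{Product} and the outer induction resolves $I_{s-1,t-1}\cdot Q_{s+1}$. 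Lemma~\ref{Union} then delivers $\Gamma_{n,t}(s+1)$.

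Taking $s=n$ gives $\Gamma_{n,t}$. Connectedness is immediate because consecutive pieces $\Gamma_{i,t-1}\times\Delta(Q_{i+1})$ and $\Gamma_{i+1,t-1}\times\Delta(Q_{i+2})$ share the nonempty subcomplex $\Gamma_{i,t-1}\times\Delta(Q_{i+2})$, and the inclusion $\Gamma_{s,t}\subset\Gamma_{s+1,t}$ is visible directly from the explicit union formula. The main obstacle is the set-theoretic identification of the cell-complex intersection at each gluing step, and this is precisely why the theorem must carry the inclusions $\Gamma_{s,t-1}\subset\Gamma_{s+1,t-1}$ through the induction on $t$.
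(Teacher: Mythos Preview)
Your proof is correct and follows essentially the same approach as the paper: build $\Gamma_{n,t}(s)$ one piece at a time via the gluing Lemma~\ref{Union}, using Lemma~\ref{TLemma}(b) for the ideal intersection and the nested inclusions $\Gamma_{i,t-1}\subset\Gamma_{s,t-1}$ for the cell-complex intersection. The only cosmetic difference is that you organize the argument as a clean double induction (outer on $t$ with base case $t=1$, inner on $s$), whereas the paper runs a single induction over all admissible triples $(n,t,s)$ and appeals to Proposition~\ref{TProp} for the base case $\Gamma_{t-1,t-1}$; your version is slightly more self-contained since it does not need Proposition~\ref{TProp} at all.
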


\begin{proof} Observe that the cell complex $\Gamma_{n,t} $ defined
recursively as above, can also be labeled recursively. Thus we first
show the inclusion $\Gamma_{k,t}  \subset \Gamma_{k+1,t}$ as labeled
cell complexes, for all $k$, $t\le k\le n-1$. Using the notation of
the previous lemma we have
$$\Gamma_{k,t} = \bigcup_{i=t-1}^{k-1}[\Gamma_{i,t-1} \times
\Delta(Q_{i+1}(k))] \subset \bigcup_{i=t-1}^k[\Gamma_{i,t-1} \times
\Delta(Q_{i+1}(k+1))] = \Gamma_{k+1,t}.$$ For all $s$, $t\le s\le
n$, similar to $I_{n,t}(s)$ defined in the previous lemma, let
$$\Gamma_{n,t}(s) = \bigcup_{i=t-1}^{s-1}[\Gamma_{i,t-1} \times
\Delta(Q_{i+1})],$$ so that $\Gamma_{n,t}(n) = \Gamma_{n,t}$. Thus
we have
$$\Gamma_{n,t}(s) = \Gamma_{n,t}(s-1)\cup [\Gamma_{s-1,t-1}\times\Delta(Q_s)].$$
Using this identity we prove by induction on $s$, $t\le s\le n$,
that $\Gamma_{n,t}(s)$ supports a $\mathbb{Z}^t$-graded minimal free
resolution of $I_{n,t}(s)$, implying the claim for $s=n$. For $s=t$,
$\Gamma_{n,t}(t) = \Gamma_{t-1,t-1}\times \Delta(Q_s)$ which support
a graded minimal free resolution of $I_{n,t}(t) = I_{t-1,t-1}.Q_s$
by Proposition \ref{TProp} and Lemma \ref{TLemma}. Assume that
$\Gamma_{n,t}(s)$ supports a graded minimal free resolution of
$I_{n,t}(s)$ for all $s$, $n$, $t$ with $t\le s<s_0\le n$. We claim
$$\Gamma_{n,t}(s_0) = \Gamma_{n,t}(s_0-1)\cup [\Gamma_{s_0 -1,t-1}\times \Delta(Q_{s_0})]$$
supports a graded minimal free resolution of
$$I_{n,t}(s_0) = I_{n,t}(s_0-1) + [I_{s_0-1,t-1}.Q_{s_0}].$$
We apply the gluing lemma. By induction hypothesis each summand of
$\Gamma_{n,t}(s_0)$ supports a graded minimal free resolution of the
corresponding summand in $I_{n,t}(s_0)$. By the previous lemma,
$$I_{n,t}(s_0-1) \cap [I_{s_0-1,t-1}.Q_{s_0}] =  I_{s_0-2,t-1}.Q_{s_0}.$$
We also have
$$\Gamma_{n,t}(s_0-1) \cap [\Gamma_{s_0-1,t-1}\times \Delta(Q_{s_0})]=
[\bigcup_{i=t-1}^{s_0-2}[\Gamma_{i,t-1} \times
\Delta(Q_{i+1})]\cap [\Gamma_{s_0-1,t-1} \times \Delta(Q_{s_0})] $$
$$= \bigcup_{i=t-1}^{s_0-2}[(\Gamma_{i,t-1}\cap \Gamma_{s_0-1,t-1})\times
(\Delta(Q_{i+1})\cap \Delta(Q_{i+1}) ] = \Gamma_{s_0-2,t-1}\times
\Delta(Q_{s_0}),$$ where the last equality is valid by Proposition
\ref{TProp}. But $\Gamma_{s_0-2,t-1}\times \Delta(Q_{s_0})$ supports
a graded minimal free resolution of $I_{s_0-2,t-1}.Q_{s_0}$, which
is what we need for the gluing lemma. Therefore, $\Gamma_{n,t}(s_0)$
supports a graded minimal free resolution of $I_{n,t}(s_0)$ as
required. The regularity of $\Gamma_{n,t}$ is clear because every
polytopal cell complex is regular \cite{Zie95}. The connectedness of
$\Gamma_{n,t}$ follows from the equality
$$\Gamma_{n,t}(s_0-1) \cap [\Gamma_{s_0-1,t-1}\times \Delta(Q_{s_0})]=
 \Gamma_{s_0-2,t-1}\times
\Delta(Q_{s_0}).$$
\end{proof}

\begin{rem}\label{compare}
The minimal free resolution provided in \cite[Theorem 3.1]{ZZ04} and
\cite[Theorem 2.1]{Zaa11} for a transversal monomial ideal is not
polytopal for $2 \le t \le n-3$. As an example, consider $I_{5,2}$
($m=n=5$). Following the construction there, the corresponding cell
complex is the same as the Morse complex which supports a minimal
free resolution of $(y_1, y_2, y_3, y_4)^2 \subset \mathbf{k}[y_1,
y_2, y_3, y_4]$ illustrated in \cite[Fig. 3]{Sin08} which is not
polytopal. In fact, the two ``faces'' $\{x_1x_2, x_1x_3, x_3x_5,
x_2x_3\}$ and $\{x_1x_2, x_1x_3, x_3x_4, x_2x_3\}$ have two edges in
common but their union is not a face of either of them.
\end{rem}

\begin{exam}\label{TExam} Consider $I_{4,3}$ with $b_1 = b_2 = 2$,
$b_3 = b_4 = 1$. Then\\

\noindent $\Gamma_{4,3} = [\Delta(x_{11},x_{12},x_{21},x_{22})\times
\Delta(x_{31})\times \Delta(x_{41})]\cup
[\Delta(x_{11},x_{12})\times \Delta(,x_{21},x_{22},x_{31})\times
\Delta(x_{41})]\cup [\Delta(x_{11},x_{12})\times
\Delta(,x_{21},x_{22})\times \Delta(x_{31},x_{41})].$\\

\noindent This is illustrated in Figure 1.\\

\begin{tikzpicture}
\draw[black, ultra thick] (-2.3,4.2) -- (-3,0.75); \draw[black,
ultra thick] (-3,0.75) -- (0.6,0.75); \draw[black, ultra thick]
(0.6,0.75) -- (-2.3,4.2); \draw[black, ultra thick] (0.6,0.75) --
(-0.45,3.75); \draw[gray, densely dotted] (-0.45,3.75) -- (-3,0.75);
\draw[black, ultra thick] (-0.45,3.75) -- (-2.3,4.2); \draw[red,
ultra thick] (-0.45,3.75) -- (0.75,6.75); \draw[red, ultra thick]
(0.75,6.75) -- (1.8,3.75); \draw[red, ultra thick] (1.8,3.75) --
(0.6,0.75); \draw[red, ultra thick] (1.8,3.75) -- (3.75,0.75);
\draw[red, ultra thick] (3.75,0.75) -- (0.6,0.75); \draw[red,
densely dotted] (0.75,6.75) -- (2.55,3.75); \draw[red, densely
dotted] (2.55,3.75) -- (3.75,0.75); \draw[red, densely dotted]
(-0.45,3.75) -- (2.55,3.75); \draw[blue, ultra thick] (0.75,6.75) --
(4.2,8.7); \draw[blue, ultra thick] (4.2,8.7) -- (5.25,5.7);
\draw[blue, ultra thick] (5.25,5.7) -- (1.8,3.75); \draw[blue, ultra
thick] (5.25,5.7) -- (7.2,2.7); \draw[blue, ultra thick] (7.2,2.7)
-- (3.75,0.75); \draw[blue, ultra thick] (7.2,2.7) -- (6,5.7);
\draw[blue, ultra thick] (6,5.7) -- (4.2,8.7); \draw[blue, densely
dotted] (6,5.7) -- (2.55,3.75); \filldraw[black] (0.6,0.75) circle
(2pt) node[anchor=north west] {$x_{12}x_{31}x_{41}$};
\filldraw[black] (-2.3,4.2) circle (2pt) node[anchor=south]
{$x_{22}x_{31}x_{41}$}; \filldraw[black] (-3,0.75) circle (2pt)
node[anchor=north] {$x_{21}x_{31}x_{41}$}; \filldraw[black]
(-0.45,3.75) circle (2pt) node[anchor=south west]
{$x_{11}x_{31}x_{41}$}; \filldraw[black] (0.75,6.75) circle (2pt)
node[anchor=east] {$x_{11}x_{21}x_{41}$}; \filldraw[black]
(2.55,3.75) circle (2pt) node[anchor=west] {$x_{11}x_{22}x_{41}$};
\filldraw[black] (3.75,0.75) circle (2pt) node[anchor=north west]
{$x_{12}x_{21}x_{41}$}; \filldraw[black] (4.2,8.7) circle (2pt)
node[anchor=south] {$x_{11}x_{21}x_{31}$}; \filldraw[black]
(5.25,5.7) circle (2pt) node[anchor=south east]
{$x_{12}x_{22}x_{31}$}; \filldraw[black] (6,5.7) circle (2pt)
node[anchor=west] {$x_{11}x_{22}x_{31}$}; \filldraw[black]
(1.8,3.75) circle (2pt) node[anchor=north east]
{$x_{12}x_{22}x_{41}$}; \filldraw[black] (7.2,2.7) circle (2pt)
node[anchor=east] {$x_{12}x_{21}x_{31}$};
\end{tikzpicture}
$$Figure \ 1$$
\vskip 0.5cm \noindent The minimal free resolution of $I_{4,3}$
(ignoring the fine grading) is immediate from Figure 1 as
$$ 0 \longrightarrow S^3(-5) \longrightarrow S^{22}(-4)
\longrightarrow S^{12}(-3) \longrightarrow I_{4,3} \longrightarrow
0.$$ \noindent The Herzog-Takayama resolution for $I_{4,3}$ requires
11 consecutive mapping cone constructions.\\
This example reveals that, in general, the cell complex
$\Gamma_{n,t}$ is not convex, and its underlying topological space
is not homeomorphic to a closed ball.
\end{exam}
For $m=n$, i.e., $b_1=\cdots =b_n=1$, $I_{n,t}$ is the {\it
square-free Veronese ideal of degree} $t$ on the variables. To
distinguish this case, we use the notation $x_i$, $I_{m,t}$ and
$\Gamma_{m,t}$ instead of $x_{i1}$, $I_{n,t}$ and $\Gamma_{n,t}$,
respectively. In this case, $\Gamma_{m,t}$ enjoys the following
further features.

\begin{prop}\label{VProp}
The polytopal cell complex $\Gamma_{m,t}$ which supports the
$\mathbb{Z}^t$-graded minimal free resolution of the square-free
Veronese ideal of degree $t$ on $x_1,\cdots, x_m$, is a polytopal
subdivision of the $(m-t)$-simplex
$$\Delta(x_1\cdots x_t, x_2\cdots x_{t+1}, \cdots, x_{n-t+1}\cdots x_m).$$
In particular, $\Gamma_{m,t}$ is shellable and its underlying
topological space is homeomorphic to a closed ball.
\end{prop}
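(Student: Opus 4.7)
My plan is to prove the proposition by induction on $t$, via an explicit affine embedding $\phi$ of $\Gamma_{m,t}$ into the target simplex $T_{m,t}:=\Delta(v_1,\ldots,v_{m-t+1})$, where $v_j:=x_jx_{j+1}\cdots x_{j+t-1}$. The base case $t=1$ is immediate since $\Gamma_{m,1}=\Delta(x_1,\ldots,x_m)=T_{m,1}$.

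For the inductive step, I would define $\phi$ on the vertex set of $\Gamma_{m,t}$ by
$$\phi(x_{i_1}\cdots x_{i_t}):= \tfrac{1}{t}\sum_{k=1}^{t} v_{i_k-k+1},\qquad 1\le i_1<\cdots<i_t\le m,$$
and extend affinely to each cell. The first key computation is that for a vertex of the top-dimensional cell $C_i:=\Gamma_{i,t-1}\times\Delta(Q_{i+1})$ of the form $(u_l,x_k)$, where $u_l:=x_l\cdots x_{l+t-2}$ is a vertex of $T_{i,t-1}$ and $k\in\{i+1,\ldots,m\}$, one checks $\phi(u_l\cdot x_k)=\tfrac{t-1}{t}v_l+\tfrac{1}{t}v_{k-t+1}$. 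Hence, under the inductive identification $u_l\leftrightarrow v_l$, the restriction $\phi|_{C_i}$ factors as the affine map
$$(\tilde p,\tilde q)\longmapsto \tfrac{t-1}{t}\tilde p+\tfrac{1}{t}\tilde q,$$
from $\mathrm{conv}(v_1,\ldots,v_{i-t+2})\times\mathrm{conv}(v_{i-t+2},\ldots,v_{m-t+1})$ into $T_{m,t}$. A direct computation in barycentric coordinates on $T_{m,t}$ shows this map is an affine bijection onto the convex polytope
$$\phi(C_i)=\{\lambda\in T_{m,t}\mid \sigma_{i-t+1}(\lambda)\le\tfrac{t-1}{t}\le\sigma_{i-t+2}(\lambda)\},$$
where $\sigma_s(\lambda):=\lambda_1+\cdots+\lambda_s$ denotes the partial sum of barycentric coordinates.

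I would then verify that $\{\phi(C_i)\}_{i=t-1}^{m-1}$ constitutes a polytopal subdivision of $T_{m,t}$: covering is immediate because the non-decreasing sequence $0=\sigma_0\le\sigma_1\le\cdots\le\sigma_{m-t+1}=1$ must cross the level $(t-1)/t$ at some index; interiors are pairwise disjoint since a strict crossing uniquely determines this index; and the face-to-face property follows from the computation $\phi(C_{i-1})\cap\phi(C_i)=\{\lambda\in T_{m,t}\mid \sigma_{i-t+1}(\lambda)=(t-1)/t\}$, which by the same factorization is exactly the image of the subcomplex $\Gamma_{i-1,t-1}\times\Delta(Q_{i+1})$---the intersection already identified in the proof of Theorem~\ref{TThm}. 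Using the inductive hypothesis inside each slab $\phi(C_i)$ then lifts the entire cellular structure of $\Gamma_{m,t}$ to a polytopal subdivision of $T_{m,t}$.

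Shellability then follows from this filtration: each $C_i$ is (by induction on $t$ together with the standard fact that a product of a shellable polytopal ball with a simplex is shellable) shellable in its own right, and its attachment to $\bigcup_{j<i}C_j$ is along the pure $(m-t-1)$-dimensional subcomplex $\Gamma_{i-1,t-1}\times\Delta(Q_{i+1})\subset\partial C_i$, allowing one to concatenate the individual shellings into a shelling of $\Gamma_{m,t}$. The underlying topological space is homeomorphic to a closed $(m-t)$-ball because it equals the simplex $T_{m,t}$. The main obstacle I expect is the careful bookkeeping in matching the inductive embedding of each $\Gamma_{i,t-1}$ with the ambient sub-simplex $\mathrm{conv}(v_1,\ldots,v_{i-t+2})\subset T_{m,t}$; this reduces to the observation that the vertex $u_l$ of $T_{i,t-1}$ is identified with $v_l$ of $T_{m,t}$ under the inductive embedding, so that all inductive subdivisions lift consistently, after which everything is barycentric arithmetic.
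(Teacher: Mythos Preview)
Your approach is correct in outline but genuinely different from the paper's. The paper also inducts on $t$ and uses the decomposition $\Gamma_{m,t}=\bigcup_i C_i$, but its inductive step is purely structural: after replacing each $|\Gamma_{i,t-1}|$ by a simplex on vertices $y_{t-1},\dots,y_i$, the union $\bigcup_i |\Delta(y_{t-1},\dots,y_i)|\times|\Delta(Q_{i+1})|$ has exactly the shape of the $t=2$ decomposition, so the paper simply invokes the $t=2$ case a second time. You instead give an explicit barycentric embedding $\phi$ and identify each $\phi(C_i)$ as the slab $\{\sigma_{i-t+1}\le (t-1)/t\le\sigma_{i-t+2}\}$. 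Your route is more concrete and yields additional information (exact vertex coordinates, the hyperplane description of the pieces); the paper's route is shorter but less explicit.

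Two points to tighten. First, your factorization claim ``$\phi|_{C_i}$ equals $(\tilde p,\tilde q)\mapsto\frac{t-1}{t}\tilde p+\frac1t\tilde q$'' must be checked on \emph{all} vertices of $C_i$, not only on the special ones $(u_l,x_k)$; equivalently, your induction hypothesis must record that the inductive embedding of $\Gamma_{i,t-1}$ is given by the same formula $\phi$ at level $t-1$. Once you state this, the general-vertex check $\phi(x_{i_1}\cdots x_{i_{t-1}}\cdot x_k)=\frac{t-1}{t}\psi_i(x_{i_1}\cdots x_{i_{t-1}})+\frac1t v_{k-t+1}$ is a one-line computation. Second, your shellability argument by concatenation is incomplete as stated: attaching along a pure codimension-one subcomplex is not sufficient---you also need that $\Gamma_{i-1,t-1}\times\Delta(Q_{i+1})$ is the initial segment of a shelling of $\partial C_i$. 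The paper avoids this by citing the fact that regular subdivisions of a simplex are shellable \cite{Zie95}; your slab description actually makes regularity of the full subdivision accessible (the partial-sum functions furnish a piecewise-linear lifting), and invoking that would be cleaner than the concatenation argument.
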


\begin{proof}
The proof is by induction on $t$. For $t = 1$, it is trivial;
$\Gamma_{m,1} = \Delta(x_1, \cdots, x_m)$. We also need to check the
claim for $t = 2$. Adopting the notation of Lemma 3.2, we have
$$ \Gamma_{m,2} = \bigcup_{i=1}^{m-1}[\Gamma_{i,1} \times
\Delta(Q_{i+1})] = \bigcup_{i=1}^{m-1}[\Delta(x_1, \cdots,
x_i)\times \Delta(Q_{i+1})].$$ Thus, using the notation $| |$ for
the underlying topological spaces of the polytopes in
$\mathbb{R}^{m-t}$, we have
$$|\Gamma_{m,2}| = \bigcup_{i=1}^{m-1}|\Delta(x_1, \cdots, x_i)|\times
|\Delta(Q_{i+1})| = |\Delta(x_1x_2, x_2x_3, \cdots, x_{m-1}x_m)|,$$
where the last equality is a natural partition of the
$(m-2)$-simplex on $$\{x_1x_2, x_2x_3, \cdots, x_{m-1}x_m\}$$ into
the union of polytopes while each polytope is the product of an
$(i-1)$-simplex on $\{x_1, \cdots, x_i\}$ with an $(m-i-1)$-simplex
on $\{x_{i+1}, \cdots, x_m\}$, $i = 1, \cdots, m-1$. Thus,
$\Gamma_{m,2}$ is a regular subdivision of the $(m-2)$-simplex
$\Delta(x_1x_2, x_2x_3, \cdots, x_{m-1}x_m)$. For the induction
step, by the previous theorem $$\Gamma_{m,t} =
\bigcup_{i=t-1}^{m-1}[\Gamma_{i,t-1} \times \Delta(Q_{i+1})].$$ By
induction hypothesis $$|\Gamma_{m,t}| =
\bigcup_{i=t-1}^{m-1}[|\Delta(y_{t-1},\cdots, y_i)| \times
|\Delta(Q_{i+1})|],$$ where, $y_i=x_{i-t+2}\cdots x_i$,
$i=t-1,\cdots, m-1$. Thus by the case $t=2$, $$|\Gamma_{m,t}| =
|\Delta(y_{t-1}x_t, y_tx_{t+1},\cdots, y_{m-1}x_m)| =
|\Delta(x_1\cdots x_t, x_2\cdots x_{t+1}, \cdots, x_{m-t+1}\cdots
x_m)|$$ as required.\\ Shellability of $\Gamma_{n,t}$ follows from a
general result that any regular subdivision of a simplex is
shellable \cite[page 243]{Zie95}. The underlying topological space
of $\Gamma_{n,t}$ is a closed ball since it is a regular subdivision
of a simplex.
\end{proof}

\begin{rem}\label{VRem} Proposition \ref{VProp} also follows by a result of
Sinefakopoulos \cite[Theorem 12]{Sin08}. In fact, let $s = n-t+1$and
let $J = (y_1, \cdots, y_s)^t \subset \mathbf{k}[y_1, \cdots, y_s]$.
The depolarization given by Nagel and Reiner (see Section 3
\cite{NR09})
$$x_{i_1}x_{i_2}\cdots x_{i_t} \mapsto y_{i_1}y_{i_2-1}\cdots
y_{i_t-t+1}, \ \ i_1<i_2<\cdots <i_t$$ is a bijection on the minimal
monomial generators of $I_{n,t}$ and $J$. The inverse map is
$$y_1^{j_1}y_2^{j_2}\cdots
y_s^{j_s} \mapsto x_1x_2\cdots x_{j_1}x_{j_1+2}x_{j_1+3}\cdots
x_{j_1+j_2+1}\cdots x_{j_1+\cdots +j_{s-1}+s}\cdots x_{j_1+\cdots
+j_s+s-1},$$ where $j_1+\cdots +j_s = t$. The change of labeling
$x_{i_1}x_{i_2}\cdots x_{i_t}$ to $y_{i_1}y_{i_2-1}\cdots
y_{i_t-t+1}$ converts the polytopal cell complex $\Gamma_{n,t}$ into
the polytopal cell complex $P_t(y_1, \cdots, y_s)$ constructed by
Sinefakopoulos which supports the resolution of $J$ (see the proof
of \cite[Theorem 12]{Sin08} for notations). Observe that under this
change of labeling, the subdivision structures of $\Gamma_{n,t}$ on
the simplex $\Delta(x_1\cdots x_t, \cdots, x_{n-t+1}\cdots x_n)$
converts to that of $P_t(y_1, \cdots, y_s)$ on $\Delta(y_1^t,
\cdots, y_s^t).$
\end{rem}

\begin{rem} For $m>n$ the ideal $I_{n,t}$ is not Borel-fixed (i.e.,
strongly stable for characteristic zero) under certain
depolarization similar to Remark \ref{VRem}. For example, consider
$I_{2,2}$ with $b_1 = b_2 = 2$. Then it follows that $I_{2,2}$ is
not strongly stable under the expected depolarization.
\end{rem}
\noindent {\bf Question 3.9.} Sinefakopoulos \cite{Sin08} has used
the gluing procedure to build up a polytopal cell complex which
gives a minimal free resolution for the Borel-fixed monomial ideals
which are Borel-fixed generated in one degree. In this paper similar
procedure has been employed for the transversal monomial ideals. One
expects other monomial ideals for which the gluing construction
provides a minimal free resolution. In particular, we may ask
whether one could construct a similar cell complex to the
Herzog-Takayama resolution, and if possible, when is the resulting
cell complex homeomorphic to a closed ball?

\section*{Acknowledgments}
The author is grateful to Gunnar Fl{\o}ystad for his comments on the
first attempt towards this paper, for bringing attention to
\cite{Sin08} and \cite{NR09}, and for suggesting to work on the
cellularity of transversal monomial ideals. He thanks Afshin
Goodarzi for his useful comments on the final version of the paper.
This work has been partially supported by research grant no.
4/1/6103011 of University of Tehran.

\end{document}